\theoremstyle{plain}
\newtheorem{theorem}{Theorem}
\newtheorem{proposition}{Proposition}
\newtheorem{corollary}{Corollary}
\date{}
\begin{document}

\begin{center}
\textbf{\LARGE{On complexity and Jacobian of cone over a graph}}
\vspace{12pt}

{\large\textbf{L.~A.~Grunwald,}}\footnote{{\small\em Sobolev Institute of Mathematics,
Novosibirsk State University, lfb\_o@yahoo.co.uk}}
{\large\textbf{I.~A.~Mednykh,}}\footnote{{\small\em Sobolev Institute of Mathematics,
Novosibirsk State University, ilyamednykh@mail.ru}}
\end{center}

\section*{Abstract}

For any finite graph $G,$ consider a graph $\widetilde{G}$ which is a cone over graph $G.$ In this paper, we study two important invariants of such a cone. Namely, complexity (the number of spanning trees) and the Jacobian of a graph. We prove that complexity of graph $\widetilde{G}$ coincides the number of rooted spanning forests in graph $G$ and the Jacobian of $\widetilde{G}$ is isomorphic to cokernel of the operator $I+L(G),$ where $L(G)$ is Laplacian of $G$ and $I$ is the identity matrix. As a consequence, one can calculate the complexity of $\widetilde{G}$ as $\det(I+L(G)).$

As an application, we establish general structural theorems for Jacobian of $\widetilde{G}$ in the case when
$G$  is  a circulant graph or cobordism of two circulant  graphs.

\smallskip

\noindent
\textbf{Key Words:} spanning tree, spanning forest, circulant graph, Laplacian matrix, cone over graph, Chebyshev polynomial\\
\textbf{AMS classification:} 05C30, 39A10\\

\section{Introduction}

A \textit{spanning tree} in a finite connected graph $G$ is defined as a subgraph of $G$ that contains all vertices of $G$ and has no cycles. The number of spanning trees in the graph $G$ is also called a \textit{complexity} of $G.$ It is a very important graph invariant and along with the pioneers in this field \cite{Cay89}, formulas were found for some special graphs such as the wheel \cite{BoePro}, fan \cite{Hil74}, ladder \cite{Sed69}, M\"obius ladder \cite{Sed70}, lattice \cite{SW00}, prism \cite{BB87} and anti-prism \cite{SWZ16}. However, one of the most significant and general results is the Kirchhoff matrix-tree theorem  \cite{Kir47} which states that the complexity of $G$ can be expressed as the product of nonzero Laplacian eigenvalues of $G,$ divided by the number of its vertices. In this paper, we will also apply the idea \cite{BoePro} of using Chebyshev polynomials for counting various invariants of graphs arose.

Also, no less interesting invariant of a graph is the number of rooted spanning forests in a graph $G.$  According to the classical result \cite{KelChel}, this value can be found as determinant $\det(I + L(G)).$ Here, $L(G)$ is the \textit{Laplacian matrix} of graph $G.$ However, not many explicit formulas are known. One of the first results was obtained by O. Knill \cite{Knill}, who found the analytical formula of the number of rooted spanning forests in the complete graph $K_{n}.$ Some formulas were  obtained for bipartite graphs \cite{JinLin}, cyclic, star, line graphs \cite{Knill} and some others \cite{Sung}. In our previous paper \cite{GrunMed}, we developed a new method for counting rooted spanning forests in circulant graphs. As for the number of unrooted forests, it has a much more complicated structure \cite{Calan}, \cite{Liu}, \cite{Tak}.

Another well-known invariant of a finite graph is \textit{Jacobian group} (also known as the Picard group, critical group, sandpile group, dollar group). This concept was introduced independently by several authors \cite{Cori}, \cite{Baker}, \cite{Biggs}, \cite{BaHaNa}. In particular, the order of the Jacobian group coincides with the number of spanning trees for a graph. This fact is one of the reasons why interest in the Jacobian of a graph is manifested. However, the description of the Jacobian structure remains a difficult task and its structure is known only in several cases \cite{Cori}, \cite{Biggs}, \cite{Lor}, \cite{HouWo}, \cite{Chen}, \cite{MedZin} and \cite{MedMed2}. In this paper we intend to use the result \cite{Med1} about the cokernel structure of Laplacian operator. 

The paper is organized as follows. 

Section~\ref{basic} contains basic definitions and some known results on circulant graphs and circulant matrices. In Section~\ref{cokernel}, we describe general properties of cokernels for $\mathbb{Z}$-linear operators represented by circulant matrices. The main result of Section~\ref{trees} is Theorem~\ref{treestoforests} which asserts   that the number of spanning trees in the cone over a graph $G$ coincides with the number of rooted spanning forests in $G.$ In Section~\ref{jacgcone}, we introduce a notion of the forest group for a graph $G$ defined as the cokernel of $\mathbb{Z}$-linear operator  $I+L(G).$ Then the main result of the section (Theorem~\ref{forest}) states that Jacobian of the cone over a graph $G$ is isomorphic to the forest group of $G.$ Sections \ref{circjac} and \ref{cobjac} are devoted to description of Jacobian groups for the cone over a circulant graph and the cone over cobordism of two circulant graphs respectively. Lastly, in Section~\ref{Exp}, we use the obtained results to calculate Jacobian group and number of spanning trees for cones over some simple families of graphs.    
 
\section{Basic definitions and preliminary facts}\label{basic}

Let $G$ be a finite graph without loops. We denote the vertex and edge set of $G$ by $V(G)$ and $E(G),$ respectively. Given $u, v\in V(G),$ we set $a_{uv}$ to be equal to the number of edges between vertices $u$ and $v.$ The matrix $A=A(G)=\{a_{uv}\}_{u, v\in V(G)}$ is called \textit{the adjacency matrix} of the graph $G.$ The degree $d(v)$ of a vertex $v \in V(G)$ is defined by $d(v)=\sum_{u\in V(G)}a_{uv}.$ Let $D=D(G)$ be the diagonal matrix of the size $|V(G)|$ with $d_{vv} = d(v).$ The matrix $L=L(G)=D(G)-A(G)$ is called \textit{the Laplacian matrix}, or simply \textit{Laplacian}, of the graph $G.$ 

Consider the Laplacian $L(G)$ as a homomorphism $\mathbb{Z}^{n}\to\mathbb{Z}^{n},$ where $n$ is the number of vertices in $G.$ The cokernel $\textrm{coker}\,(L(G))=\mathbb{Z}^{n}/\textrm{im}\,(L(G))$ is an Abelian group. It can be uniquely represented in the form 
$$\textrm{coker}\,(L(G))\cong\mathbb{Z}_{d_{1}}\oplus\mathbb{Z}_{d_{2}}\oplus\cdots\oplus\mathbb{Z}_{d_{n}},$$ where $d_{i}$ satisfy the conditions $d_i\big{|}d_{i+1},\,(1\leq i\leq n-1).$ We note that   $d_{i}=\delta_{i}/\delta_{i-1},$ where $\delta_{i},\,i=1,2,\ldots,n$ is the greatest common divisor of all $i\times i$ minors of matrix $L(G)$ and $\delta_{0}=1.$

Suppose that the graph $G$ is connected, then the groups ${\mathbb Z}_{d_{1}},{\mathbb Z}_{d_{2}},\ldots,{\mathbb Z}_{d_{n-1}}$ --- are finite, and $\mathbb{Z}_{d_{n}}=\mathbb{Z}.$ In this case, we define Jacobian of the graph $G$ as
$$Jac(G)\cong\mathbb{Z}_{d_{1}}\oplus\mathbb{Z}_{d_{2}}\oplus\cdots\oplus\mathbb{Z}_{d_{n-1}}.$$ 
In other words, $Jac(G)$ is isomorphic to the torsion subgroup of $\textrm{coker}\,(L(G)).$

Let $s_1,s_2,\ldots,s_k$ be integers such that $1\leq s_1<s_2<\ldots<s_k\leq\frac{n}{2}.$ The graph $C_{n}(s_1,s_2,\ldots,s_k)$ with $n$ vertices $0,1,2,\ldots,~{n-1}$ is called \textit{circulant graph} if the vertex $i,\, 0\leq i\leq n-1$ is adjacent to the vertices $i\pm s_1,i\pm s_2,\ldots,i\pm s_k\ (\textrm{mod}\ n).$ When $s_k<\frac{n}{2}$ all vertices of a graph have even degree $2k.$ If $n$ is even and $s_k=\frac{n}{2},$ then all vertices have odd degree $2k-1.$ It is well known that the circulant $C_n(s_1,s_2,\ldots,s_k)$ is connected if and only if $\textrm{gcd}\,(s_1,s_2,\ldots,s_k,n)=1.$

We call an $n\times n$ matrix \textit{circulant,} and denote it by $circ(a_0, a_1,\ldots,a_{n-1})$ if it is of the form
$$circ(a_0, a_1,\ldots, a_{n-1})=
\left(\begin{array}{ccccc}
a_0 & a_1 & a_2 & \ldots & a_{n-1} \\
a_{n-1} & a_0 & a_1 & \ldots & a_{n-2} \\
  & \vdots &   & \ddots & \vdots \\
a_1 & a_2 & a_3 & \ldots & a_0\\
\end{array}\right).$$

It easy to see that adjacency and Laplacian matrices of the circulant graph are circulant matrices. The converse is also true. If the Laplacian matrix of a graph is circulant then the graph is also circulant.

Recall \cite{PJDav} that the eigenvalues of matrix $C=circ(a_0,a_1,\ldots,a_{n-1})$ are given by the following simple formulas $\lambda_j=P(\varepsilon^j_n),\,j=0,1,\ldots,n-1,$ where $P(x)=a_0+a_1 x+\ldots+a_{n-1}x^{n-1}$ and $\varepsilon_n$ is an order $n$ primitive root of the unity. Moreover, the circulant matrix $C=P(T),$ where $T_n=circ(0,1,0,\ldots,0)$ is the matrix representation of the shift operator $T_n:(x_0,x_1,\ldots,x_{n-2},x_{n-1})\rightarrow(x_1, x_2,\ldots,x_{n-1},x_0).$

\section{Cokernels of linear operators}\label{cokernel}

Let $P(z)$ be a bimonic integer Laurent polynomial. That is
$P(z)=z^p+a_1z^{p+1}+\ldots+a_{s-1}z^{p+s-1}+z^{p+s}$ for some integers
$p,a_1,a_2,\ldots,a_{s-1}$ and some positive integer $s.$ Introduce the
following \textit{companion matrix} $\mathcal{A}$ for polynomial $P(z):$
$\mathcal{A}=\left(\begin{array}{c}\begin{array}{c|c} \mathbb{0} &
I_{s-1}\end{array}\\\hline -1,-a_1,\ldots,-a_{s-1}\\
\end{array}\right),$ where $I_{s-1}$ is the identity $(s-1)\times(s-1)$ matrix.
We  note that  $\mathcal{A}$ is invertible and inverse matrix $\mathcal{A}^{-1}$ is also
an integer matrix.  

Let $\mathbb{A}=\langle\alpha_j,\,j\in\mathbb{Z}\rangle$ be a free Abelian group freely
generated by elements $\alpha_j,\,j\in\mathbb{Z}.$ Each element of $\mathbb{A}$ is a
linear combination $\sum\limits_{j}c_j \alpha_j$ with integer coefficients $c_j.$
Define the shift operator $T:\mathbb{A}\rightarrow\mathbb{A}$ as a
$\mathbb{Z}$-linear operator acting on generators of $\mathbb{A}$ by
the rule $T:\alpha_j\rightarrow\alpha_{j+1},\,j\in\mathbb{Z}.$
Then $T$ is an endomorphism of $\mathbb{A}.$ 

Let $P(z)$ be an arbitrary Laurent
polynomial with integer coefficients, then $A=P(T)$ is also an endomorphism of
$\mathbb{A}.$ Since $A$ is a linear combination of powers of $T,$ the action of
$A$ on generators $\alpha_j$ can be given by the infinite set of linear transformations
$A:\alpha_j\to\sum\limits_{i}a_{i,j}\alpha_i,\, j\in\mathbb{Z}.$ Here all sums
under consideration are finite. We set $\beta_j=\sum\limits_{i}a_{i,j}\alpha_i.$
Then $\textrm{im}\,A$ is a subgroup of $\mathbb{A}$ generated by $\beta_j,\,j\in\mathbb{Z}.$
Hence, $\textrm{coker}\,A=\mathbb{A}/\textrm{im}\,A$ is an abstract Abelian group
$\langle x_i, i\in\mathbb{Z}|\,\sum\limits_{i}a_{i,j}x_i=0,\,j\in\mathbb{Z}\rangle$
generated by $x_i,\,i\in\mathbb{Z}$ with the set of defining relations
$\sum\limits_{i}a_{i,j}x_i=0,\,j\in\mathbb{Z}.$ Here $x_j$ are images of $\alpha_j$
under the canonical homomorphism $\mathbb{A}\rightarrow\mathbb{A}/\textrm{im}\,A.$
Since $T$ and $A=P(T)$ commute, subgroup $\textrm{im}\,A$ is invariant under the
action of $T.$ Hence, the actions of $T$ and $A$ are well defined on the factor
group $\mathbb{A}/\textrm{im}\,A$ and are given by $T:x_j\rightarrow x_{j+1}$ and
$A:x_j\rightarrow\sum\limits_{i}a_{i,j}x_i$ respectively.

This allows to present the group $\mathbb{A}/\textrm{im}\,A$ as follows
$\langle x_i,\,i\in\mathbb{Z}|\,P(T)x_j=0,\,j\in\mathbb{Z}\rangle.$ In a similar
way, given a set $P_1(z),P_2(z),\ldots,P_s(z)$ of Laurent polynomials with integer
coefficients, one can define the group
$\langle x_i,\,i\in\mathbb{Z}|\,P_1(T)x_j=0,P_2(T)x_j=0,\ldots,P_s(T)x_j=0,\,j\in\mathbb{Z}\rangle.$

We will use the following proposition. By $I=I_n$ we denote the identity matrix of order $n.$

\begin{proposition}\label{prop0}Let  $P(z)$ be a bimonic Laurent polynomial with integer coefficients and $\mathcal{A}$ be a companion matrix of $P(z).$  Consider  $L=P(T_n):\mathbb{Z}^n\to \mathbb{Z}^n$ as a $\mathbb{Z}$-linear   operator. Then $$\textrm{coker}\,L\cong \textrm{coker}(\mathcal{A}^{n}-I).$$  
\end{proposition}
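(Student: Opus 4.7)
The plan is to realize both cokernels as the same quotient ring $R:=\mathbb{Z}[z]/(Q(z),\,z^n-1),$ where $Q(z)=z^{-p}P(z)=1+a_1z+\cdots+a_{s-1}z^{s-1}+z^s$ is the ordinary polynomial obtained from the Laurent polynomial $P$ by clearing the (possibly negative) lowest exponent.

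For the left-hand cokernel, I would identify $\mathbb{Z}^n$, viewed as a $\mathbb{Z}[T_n]$-module, with the cyclic module $\mathbb{Z}[z]/(z^n-1)$, under which $T_n$ acts as multiplication by $z$; this is legitimate because $\mathbb{Z}^n$ is cyclic with generator $e_0$ and annihilator $(T_n^n-I)$. Since $z\cdot z^{n-1}=1$ in this quotient, $z$ is a unit, so the Laurent operator $P(T_n)$ is unambiguously multiplication by $P(z)$, and $z^p$ being a unit also gives $(P(z))=(Q(z))$. Hence $\textrm{coker}\,L\cong \mathbb{Z}[z]/(z^n-1,P(z))=R.$

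For the right-hand cokernel, the key observation is that the matrix $\mathcal{A}$ defined in the proposition is exactly the transpose of the standard companion matrix $C$ of $Q(z)$: placing the coefficients $-1,-a_1,\ldots,-a_{s-1}$ in the last row of $\mathcal{A}$ rather than in the last column of $C$ is precisely transposition. Now $C$ realizes multiplication by $z$ on $\mathbb{Z}[z]/Q(z)$ in the monomial basis $1,z,\ldots,z^{s-1}$, so $C^n-I$ realizes multiplication by $z^n-1$, giving $\textrm{coker}(C^n-I)\cong R.$ Since an integer matrix and its transpose share the same Smith normal form (and hence the same cokernel), $\textrm{coker}(\mathcal{A}^n-I)\cong \textrm{coker}((C^n-I)^T)=\textrm{coker}(C^n-I)\cong R,$ matching the left-hand side.

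The main thing to watch is the bookkeeping among three closely related polynomials: the Laurent polynomial $P$ that defines $L$, the ordinary polynomial $Q$ that naturally pairs with $C$, and the reciprocal polynomial $\tilde{Q}(z)=z^sQ(1/z)$, which is the characteristic polynomial of $\mathcal{A}$. The transpose identification $\mathcal{A}=C^T$ bypasses $\tilde{Q}$ altogether; without it one would identify $\mathbb{Z}^s\cong\mathbb{Z}[z]/\tilde{Q}(z)$ with $\mathcal{A}$ acting as $z$, then invoke the ring involution $z\mapsto z^{-1}$ on $\mathbb{Z}[z]/(z^n-1)$ to match $\tilde{Q}$ with $Q$ --- a valid alternative, but one step longer.
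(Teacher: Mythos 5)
Your proof is correct, but it takes a genuinely different route from the paper's. The paper argues through the infinite presentation $\langle x_i,\,i\in\mathbb{Z}\mid P(T)x_j=0,\ (T^n-1)x_j=0\rangle$: it first shows that the cokernel of $P(T)$ on the free abelian group $\bigoplus_{j\in\mathbb{Z}}\mathbb{Z}\alpha_j$ is free of rank $s$ by solving the recurrence $\mathbf{x}_j=\mathcal{A}\mathbf{x}_{j-1}$, observes that the induced shift acts on this $\mathbb{Z}^s$ by $\mathcal{A}$, and then invokes Lemma~3.1 of \cite{Med1} to get $\textrm{coker}\,L\cong\textrm{coker}\bigl((T^n-1)|_{\textrm{coker}\,P(T)}\bigr)=\textrm{coker}(\mathcal{A}^n-I)$. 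You instead exhibit both sides as the single ring $\mathbb{Z}[z]/(Q(z),z^n-1)$ --- reading $L$ as multiplication by $Q(z)$ on $\mathbb{Z}[z]/(z^n-1)$ and $C^n-I$ as multiplication by $z^n-1$ on $\mathbb{Z}[z]/(Q(z))$ --- and reconcile the paper's $\mathcal{A}$ with the standard companion matrix $C$ via $\mathcal{A}=C^T$ and the invariance of the Smith normal form under transposition. Your version is self-contained (no infinite generating set, no appeal to the external lemma) and makes it transparent that the two cokernels are two presentations of the same finite quotient ring; the paper's machinery, on the other hand, is built to handle several simultaneous relations $P_1(T)x_j=\cdots=P_s(T)x_j=0$, which is exactly the form needed later for odd-valency circulants and cobordisms. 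One small correction to your closing aside: since $\mathcal{A}=C^T$, its characteristic polynomial is $Q(z)$ itself, not the reciprocal polynomial $\widetilde{Q}(z)=z^sQ(1/z)$ (that is, up to normalization, the characteristic polynomial of $\mathcal{A}^{-1}$); the two agree only when $Q$ is palindromic, as it happens to be in the paper's applications. This slip lives entirely in the alternative route you chose not to take, so it does not affect your argument.
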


\begin{proof}  Since the Laurent polynomial $P(z)$ is bimonic, it can be
represented in the form $P(z)=z^{p}+a_1z^{p+1}+\ldots+a_{s-1}z^{p+s-1}+z^{p+s},$
where $p,s,a_1,a_2,\ldots,a_{s-1}$ are integers and $s>0.$ Then the corresponding companion matrix
$\mathcal{A}$ is
$\left(\begin{array}{c}
\begin{array}{c|c}0 &I_{s-1}\end{array}\\ \hline
-1,-a_1,\ldots,-a_{s-1}
\end{array}\right).$

Let $T$ be the shift operator defined by  
$T:x_j\rightarrow x_{j+1}\,,j\in\mathbb{Z}.$ 
Note that for any $j\in\mathbb{Z}$ the relations $P(T)x_j=0$ can be  written as
$x_{j+s}=-x_{j}-a_1x_{j+1}-\cdots-a_{s-1}x_{j+s-1}.$

 Let $\textbf{x}_j=(x_{j+1},
x_{j+2},\ldots,x_{j+s})^t$ be $s$-tuple of generators $x_{j+1},x_{j+2},\ldots,x_{j+s}.$
Then the relation $P(T)x_j=0$ is equivalent to $\textbf{ x}_{j}=\mathcal{A}\,\textbf{x}_{j-1}.$
Hence, we have $\textbf{x}_{1}=\mathcal{A}\,\textbf{x}_0$ and
$\textbf{x}_{-1}=\mathcal{A}^{-1}\,\textbf{x}_0,$ where $\textbf{x}_0=(x_{1},x_{2},\ldots,x_{s})^t.$
Thus $\textbf{x}_{j} = \mathcal{A}^j\,\textbf{x}_0$ for any $j\in\mathbb{Z}.$ Conversely,
the latter implies $\textbf{x}_{j} = \mathcal{A}\,\textbf{x}_{j-1}$ and, as a consequence,
$P(T)x_j=0$ for all $j\in\mathbb{Z}.$

Consider $\textrm{coker}\,A=\mathbb{A}/\textrm{im}\,A$ as an abstract Abelian group with the
following representation $\langle x_i,i\in\mathbb{Z}\Large| P(T)x_j=0,j\in\mathbb{Z}\rangle.$
We show that $\textrm{coker}\,A\cong\mathbb{Z}^s.$ Indeed,
\begin{eqnarray*}
&&\textrm{coker}\,A=\langle x_i,i\in\mathbb{Z}|\,P(T)x_j=0,j\in\mathbb{Z}\rangle=\\
&&=\langle x_i,i\in\mathbb{Z}\,|\,
x_{j}+a_1 x_{j+1}+\ldots+a_{s-1} x_{j+s-1}+ x_{j+s}=0,\,j\in\mathbb{Z}\rangle\\
&&=\langle x_i,i\in\mathbb{Z}\,|\,(x_{j+1}, x_{j+2},\ldots, x_{j+s})^t=
\mathcal{A}(x_{j},x_{j+1},\ldots,x_{j+s-1})^t,\,j\in\mathbb{Z}\rangle\\
&&=\langle x_i,i\in\mathbb{Z}\,|\,(x_{j+1},x_{j+2},\ldots,x_{j+s})^t=
\mathcal{A}^{j}(x_{1},x_{2},\ldots,x_{s})^t,\,j\in\mathbb{Z}\rangle\\
&&=\langle x_1,x_2,\ldots,x_s\,|\,-\rangle\cong\mathbb{Z}^s.
\end{eqnarray*}
Now, our aim is to  find  cokernel of   $L=P(T_n).$ 
In the operator  notations  
$$\textrm{coker}\,L=\langle x_i,i\in\mathbb{Z}\Large| P(T)x_j=0,\,(T^n-1)x_j,j\in\mathbb{Z}\rangle.$$ 

We  set $B=T^n-1$  and note (\cite{Med1}, Lemma 3.1) that  
$$\textrm{coker}\,L
\cong\textrm{coker}\,A/\textrm{im}(B|_{\textrm{coker}\,A})\cong\textrm{coker}\,(B|_{\textrm{coker}\,A}).$$

We describe the action of the endomorphism $B|_{\textrm{coker}\,A}$ on the
$\textrm{coker}\,A.$ Since the operators $A=P(T)$ and $T$ commute, the action
$T|_{\textrm{coker}\,A}:x_j\to x_{j+1},\,j\in\mathbb{Z}$ on the $\textrm{coker}\,A$
is well defined. First of all, we describe the action of $T|_{\textrm{coker}\,A}$
on the set of generators $x_1,x_2,\ldots,x_s.$ For any $i=1, \ldots, s-1$, we have
$T|_{\textrm{coker}\,A}(x_i)=x_{i+1}$ and $T|_{\textrm{coker}\,A}(x_s)=x_{s+1}=
-x_1-a_1x_2-\ldots-a_{s-2}x_{s-1}-a_{s-1}x_s$. Hence, the action of $T|_{\textrm{coker}\,A}$
on the $\textrm{coker}\,A$ is given by the matrix $\mathcal{A}.$ Considering $\mathcal{A}$
as an endomorphism of the $\textrm{coker}\,A,$ we can write
$T|_{\textrm{coker}\,A}=\mathcal{A}.$ Then,  
$B|_{\textrm{coker}\,A}= \mathcal{A}^n -I$  and $ \textrm{coker}\,L\cong\textrm{coker}\,(\mathcal{A}^n -I).$
 
\end{proof}

\section{The number of spanning trees of a cone over a graph}\label{trees}

The \textit{joint} of graphs $G_{1}$ and $G_{2}$ is called the graph $G=G_{1}*G_{2},$ of order $m+n$, obtained from the disjoint union of $G_{1}$ of order $m,$ and $G_{2}$ of order $n,$ by additionally joining every vertex of $G_{1}$ to every vertex of $G_{2}.$ If $G_{2}=K_{1}$ (the one-vertex graph with no edges) we are going to call the graph $G=G_{1}*K_{1}$ a \textit{cone over graph} $G_{1}.$

Let $G$ be a graph on $n$ vertices. We define $\chi_{G}(\lambda)=\det(\lambda\,I_{n}-L(G))$ as the characteristic polynomial of matrix $L(G),$ which is Laplcaian matrix of graph $G.$ Its extended form is
$$\chi_{G}(\lambda)=\lambda^n+c_{n-1}\lambda^{n-1}+\ldots+c_1\lambda.$$
The theorem by Kelmans and Chelnokov \cite{KelChel} states  that the absolute value of coefficient $c_{k}$ of $\chi_{G}(\lambda)$ coincides with the number of rooted spanning $k$-forests in the graph $G.$ So, the number of rooted spanning forests of the graph $G$ can be found by the formula
\begin{eqnarray}\label{KelmCheln}
f(G)&=&f_{1}+f_{2}+\ldots+f_{n}=|c_{1}-c_{2}+c_{3}-\ldots+(-1)^{n-1}|\\
\nonumber &=&(-1)^{n}\chi_{G}(-1)=\det(I_{n}+L(G)).
\end{eqnarray}

This result was independently obtained by many authors:  see, for example,  \cite{ChebSham},   \cite{Knill},   and \cite{GrunMed}.  

The main result of this section is the following theorem.

\begin{theorem}\label{treestoforests}
The number of spanning trees $\tau(\widetilde{G})$ in the graph $\widetilde{G},$ which is a cone over a graph $G,$ coincides with the number of rooted spanning forests $f(G)$ in the graph $G.$ 
\end{theorem}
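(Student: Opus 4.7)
The plan is to combine the matrix-tree theorem applied to $\widetilde{G}$ with the Kelmans--Chelnokov formula (\ref{KelmCheln}) stated just above. The key observation is that the Laplacian of the cone has a very clean block structure in which the top-left block is exactly $I_n + L(G)$.

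First I would write down $L(\widetilde{G})$ explicitly. Label the apex vertex (the one coming from $K_1$) as vertex $n+1$. Each vertex of $G$ gains exactly one new edge (to the apex), so its degree increases by $1$; the apex has degree $n$; and the adjacency block between $G$ and the apex is the all-ones column. Therefore
$$L(\widetilde{G}) = \begin{pmatrix} L(G) + I_n & -\mathbf{1} \\ -\mathbf{1}^{t} & n \end{pmatrix},$$
where $\mathbf{1}$ is the all-ones column of length $n$.

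Next I would invoke Kirchhoff's matrix-tree theorem: $\tau(\widetilde{G})$ equals any principal cofactor of $L(\widetilde{G})$. Delete the row and column corresponding to the apex; what remains is precisely $L(G)+I_n$. Hence
$$\tau(\widetilde{G}) = \det\bigl(I_n + L(G)\bigr).$$
Finally, formula (\ref{KelmCheln}) gives $f(G) = \det(I_n + L(G))$, so $\tau(\widetilde{G}) = f(G)$, as claimed.

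There is essentially no obstacle here; the argument is a two-line identification of determinants. The only point requiring a moment of care is choosing the cofactor along the apex row/column (rather than a row/column indexed by a vertex of $G$), because this is what produces exactly the matrix $I_n + L(G)$ needed to match the Kelmans--Chelnokov expression. Any other choice of cofactor would also compute $\tau(\widetilde{G})$ by the matrix-tree theorem, but it would not exhibit the number of rooted spanning forests of $G$ on the nose.
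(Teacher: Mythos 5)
Your proof is correct, and it takes a genuinely different route from the paper. You apply the cofactor form of Kirchhoff's matrix-tree theorem directly to the block Laplacian
$L(\widetilde{G}) = \left(\begin{smallmatrix} L(G)+I_n & -\mathbf{1} \\ -\mathbf{1}^{t} & n \end{smallmatrix}\right)$,
deleting the apex row and column to obtain $\tau(\widetilde{G})=\det(I_n+L(G))$ on the nose, and then conclude by the Kelmans--Chelnokov identity $f(G)=\det(I_n+L(G))$. The paper instead uses the derivative formula $\tau(\widetilde{G})=\frac{(-1)^{n}}{n+1}\chi_{\widetilde{G}}'(0)$ together with Kelmans's product formula for the Laplacian characteristic polynomial of a join $G_1*G_2$, takes a limit as $x\to 0$, and lands on $(-1)^{n}\chi_{G}(-1)$, which equals the same determinant. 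Your argument is shorter and more elementary, and it has the added benefit of exhibiting exactly the matrix $I_n+L(G)$ whose cokernel governs the Jacobian in the paper's Theorem~\ref{forest} (whose proof writes down the very same block decomposition); the paper's route, by contrast, passes through the general join formula, which is what naturally yields Corollary~\ref{laplacian} in the form $|\chi_G(-1)|$ and would extend to joins $G_1*G_2$ with $G_2$ larger than a single vertex. Your closing remark is also well taken: the apex cofactor is the one choice that identifies the determinant with the forest count without further manipulation.
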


\begin{proof}
As a corollary of the well-known Matrix-Tree-Theorem \cite{Kir47}, the number of spanning trees of graph $G$ of order $n,$ can be found by the next formula $\tau(G)=\frac{(-1)^{n-1}}{n}\chi_{G}'(0).$ According to the result by Kelmans (\cite{Kel1}, \cite{Kel2}, see also \cite{Bojan}, Corollary 3.7), for characteristic polynomial of a joint of two graphs $G_{1}$ and $G_{2},$ of order $m$ and $n,$ we have 
$$\chi_{G_{1}*G_{2}}(x)=\frac{x(x-n-m)}{(x-n)(x-m)}\chi_{G_{1}}(x-n)\chi_{G_{2}}(x-m).$$
As a consequence, for a graph $\widetilde{G}=G_{1}*G_{2},$ where $G_{1}=G$ and $G_{2}=K_{1},$ we get
$$\tau(\widetilde{G})=\frac{(-1)^{n}}{n+1}\chi_{\widetilde{G}}'(0)=\frac{(-1)^{n}}{n+1}\chi_{G*K_{1}}'(0)=\frac{(-1)^{m}}{m+1}\left(\frac{x(x-1-n)}{(x-1)(x-n)}\chi_{K_{1}}(x-n)\chi_{G}(x-1)\right)_{x=0}'.$$ 
It is known that $\chi_{K_{1}}(x)=x$, so we obtain
$$\tau(\widetilde{G})=\frac{(-1)^{n}}{n+1}\lim_{x\to0}\frac{(x-1-n)(x-n)\chi_{G}(x-1)}{(x-1)(x-n)}=(-1)^{n}\chi_{G}(-1).$$ By making use of formula (\ref{KelmCheln}) we finish the proof.
\end{proof}

The following corollary gives a convenient way to calculate the complexity of cone over a graph.

\bigskip
\begin{corollary}\label{laplacian} The number of spanning trees in the cone over a graph $G$ is given by the  formula $|\chi_{G}(-1)|,$ where $\chi_{G}(x)$ is the Laplacian characteristic polynomial of $G.$
\end{corollary}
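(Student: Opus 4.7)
The corollary is essentially a repackaging of what was already proved in Theorem~\ref{treestoforests}, so my plan is to extract the formula $\tau(\widetilde{G})=(-1)^n\chi_G(-1)$ from that proof and then remove the sign ambiguity.

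First I would recall that in the course of proving Theorem~\ref{treestoforests}, the identity
\[
\tau(\widetilde{G})=(-1)^n\chi_G(-1)
\]
was established directly via Kelmans' formula for the characteristic polynomial of a join. So the content of the corollary is really just that the right-hand side is inherently nonnegative, and hence equals $|\chi_G(-1)|$.

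To justify that, I would use the expansion $\chi_G(x)=\det(xI_n-L(G))$ evaluated at $x=-1$, which gives $\chi_G(-1)=\det(-I_n-L(G))=(-1)^n\det(I_n+L(G))$. Therefore $(-1)^n\chi_G(-1)=\det(I_n+L(G))$, which by the Kelmans--Chelnokov formula~(\ref{KelmCheln}) is the number $f(G)$ of rooted spanning forests in $G$, and in particular a nonnegative integer. Combining with Theorem~\ref{treestoforests} yields
\[
\tau(\widetilde{G})=(-1)^n\chi_G(-1)=|\chi_G(-1)|,
\]
as required.

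There is no real obstacle here: the argument is a one-line consequence of the preceding theorem, and the only subtlety is checking the sign, which is handled by rewriting $\chi_G(-1)$ as $(-1)^n\det(I_n+L(G))$ so that the absolute value is transparent.
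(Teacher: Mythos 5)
Your argument is correct and follows exactly the route the paper intends: the identity $\tau(\widetilde{G})=(-1)^n\chi_G(-1)$ from the proof of Theorem~\ref{treestoforests} combined with formula~(\ref{KelmCheln}), which identifies $(-1)^n\chi_G(-1)$ with the nonnegative integer $\det(I_n+L(G))=f(G)$, so the absolute value sign is justified. The paper gives no separate proof of the corollary precisely because it is this immediate consequence.
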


\textbf{Remark to Theorem~\ref{treestoforests}.} There is a natural way to get a one-to-one correspondence between spanning trees in the cone $\widetilde{G}$ and rooted spanning forests in the graph $G.$  

Indeed, consider $\widetilde{G}$ as a joint $G*\{v_0\}$ of $G$ with one-vertex graph $\{v_0\}.$ Let $t$ be a spanning tree in $\widetilde{G}.$ We note that $v_0$ is a vertex of $t.$ Let  $v_0v_j,\,j=1,2,\ldots,k$ be all the edges of graph $t$ coming from vertex $v_0.$ Then $f=t\cap G$ is a spanning forest in $G$ consisting of $k$ trees $t_1,t_2,\ldots,t_k$ chosen in such a way that $v_j$ is a vertex of $t_j.$ So, the pairs $(t_j,v_j),\,j=1,2,\ldots,k$ form a rooted spanning forest in $G.$ In turn, if $(t_j,v_j),\,j=1,2,\ldots,k$ is a rooted spanning forest in $G,$ then the graph $t$ obtained as a union of edges $v_0v_j$ and trees $t_j,\,j=1,2,\ldots,k$ is a spanning tree in $\widetilde{G}.$

\section{Jacobian of a cone over a graph and forest group}\label{jacgcone}
\bigskip

The aim of the current section is to prove the following theorem.

\begin{theorem}\label{forest}
Let $G$ be a graph  on $n$ vertices. Then Jacobian of the cone over $G$ is isomorphic to the cokernel of linear operator $I_n+L(G).$ Here $L(G)$ is a Laplacian matrix of $G$ and $I_n$ is the identity matrix of order $n.$  
\end{theorem}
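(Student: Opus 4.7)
The plan is to write down $L(\widetilde{G})$ in block form and apply unimodular row/column operations to expose its cokernel as a direct sum $\mathbb{Z} \oplus \textrm{coker}(I_n + L(G))$; the Jacobian then coincides with the finite summand. No appeal to Proposition~\ref{prop0} is needed here, only elementary Smith normal form manipulations.

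Step one: set up the block form. Order the vertices of $\widetilde{G}$ so that the cone vertex $v_{0}$ comes last. Each vertex of $G$ has its degree increased by one in $\widetilde{G}$ (it acquires the edge to $v_{0}$), while $v_{0}$ itself has degree $n$ and is joined by a single edge to every vertex of $G$. Consequently
$$L(\widetilde{G}) \;=\; \begin{pmatrix} L(G)+I_{n} & -\mathbf{1} \\ -\mathbf{1}^{t} & n \end{pmatrix},$$
where $\mathbf{1}\in\mathbb{Z}^{n}$ is the all-ones column.

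Step two: reduce. Since the row sums of any Laplacian vanish, adding columns $1,\dots,n$ to the last column kills that column entirely: the entries above the bottom-right become $(\text{row sum of }L(G)+I_{n})+(-1) = 1-1 = 0$, and the bottom-right becomes $-n+n=0$. The symmetric operation on rows clears the last row as well, leaving the block-diagonal matrix $\textrm{diag}(I_{n}+L(G),\,0)$. These are elementary integer row/column operations, hence unimodular changes of basis on source and target, so the cokernel is preserved up to isomorphism:
$$\textrm{coker}\,L(\widetilde{G}) \;\cong\; \textrm{coker}(I_{n}+L(G)) \;\oplus\; \mathbb{Z}.$$

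Step three: extract the torsion. By Theorem~\ref{treestoforests} and formula~(\ref{KelmCheln}), the number of spanning trees of $\widetilde{G}$ equals $\det(I_{n}+L(G))$, which is a positive integer; in particular $I_{n}+L(G)$ is nonsingular and $\textrm{coker}(I_{n}+L(G))$ is finite. Therefore the free rank of $\textrm{coker}\,L(\widetilde{G})$ is exactly $1$, matching the $\mathbb{Z}$ summand coming from the kernel direction of the connected graph $\widetilde{G}$. By the definition of Jacobian as the torsion subgroup of $\textrm{coker}\,L(\widetilde{G})$, this forces $Jac(\widetilde{G})\cong\textrm{coker}(I_{n}+L(G))$.

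The only delicate point is to justify that the indicated row/column reductions really preserve the cokernel; this is the standard fact that elementary integer operations correspond to left and right multiplication by unimodular matrices, and so I would state it explicitly but not belabor it. Everything else is bookkeeping with block matrices.
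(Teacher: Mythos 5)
Your proof is correct and follows essentially the same route as the paper: both write $L(\widetilde{G})$ in block form with the cone vertex isolated, use the vanishing row and column sums of the Laplacian to clear that row and column by elementary integer operations, and read off $\textrm{coker}\,L(\widetilde{G})\cong\mathbb{Z}\oplus\textrm{coker}(I_{n}+L(G))$. The only cosmetic difference is that you place the cone vertex last rather than first, and you spell out the rank-one free part slightly more explicitly.
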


\begin{proof} 
For any given graph $G$ on $n$ vertices denote by $L(G)$ the Laplacian matrix of the graph $G$ and by $\widetilde{G}$ a graph that is a cone over graph $G.$ It easy to see that the Laplacian matrix of $\widetilde{G}$ can be represented in the following form
$L(\widetilde{G})=\left(\begin{array}{cc}
n & -\mathbb{1}_n \\
-\mathbb{1}_n^{T} & I_{n}+L(G)
\end{array}\right),$ where $I_{n}$ is an identity matrix of order $n$ and $\mathbb{1}_n$ is a vector $(1,1,\ldots,1)$ of length $n.$ To find the Jacobian of the $\widetilde{G}$ we use the following useful relation between the structure of the Laplacian matrix and the Jacobian of a graph \cite{Lor}.
 
 Consider the Laplacian $L(\widetilde{G})$ as a homomorphism $\mathbb{Z}^{n+1}\to\mathbb{Z}^{n+1},$ where $n+1$ is the number of vertices in $\widetilde{G}.$ The cokernel $\textrm{coker}\,(L(\widetilde{G}))=\mathbb{Z}^{n+1}/\textrm{im}\,(L(\widetilde{G}))$ --- is an Abelian group. Let $$\textrm{coker}\,(L(\widetilde{G}))\cong\mathbb{Z}_{d_{1}}\oplus\mathbb{Z}_{d_{2}}
\oplus\cdots\oplus\mathbb{Z}_{d_{n+1}}$$
be its Smith normal form satisfying the conditions  $d_{i}\big{|}d_{i+1},\,(1\le{i}\le{n}).$ As the graph $\widetilde{G}$ is connected, the groups $\mathbb{Z}_{d_{1}},\mathbb{Z}_{d_{2}},\ldots,\mathbb{Z}_{d_{n}}$ --- are finite, and $\mathbb{Z}_{d_{n+1}}=\mathbb{Z}.$ Here $d_{i}=\delta_{i}/\delta_{i-1},$ where $\delta_{i},\,i=1,2,\ldots,n+1$ is the greatest common divisor of all $i\times i$ minors of matrix $L(\widetilde{G})$ and $\delta_{0}=1.$ Then,
$$Jac(\widetilde{G})\cong\mathbb{Z}_{d_{1}}\oplus\mathbb{Z}_{d_{2}}
\oplus\cdots\oplus\mathbb{Z}_{d_{n}}$$
is the Jacobian of the graph $\widetilde{G}.$  

To calculate the Smith normal form of a given matrix one can use the following basic operations to convert the matrix to a diagonal form.

\begin{enumerate*}
\item[$1^\circ$.] Add arbitrary integer linear combination of rows to any other row.
\item[$2^\circ$.] Add arbitrary integer linear combination of columns to any other column.
\item[$3^\circ$.] Interchange any two rows or columns.
\end{enumerate*}

The matrix $\left(\begin{array}{cc}
n & -\mathbb{1}_n \\
-\mathbb{1}_n^{T} & I_{n}+L(G)
\end{array}\right)$ is the Laplacian matrix for graph $\widetilde{G}.$ So, the sum of all rows and the sum of all columns in this matrix are zero vectors. Adding all the other rows to the first row we get zero first row. Then we add to the first column the all remained columns to get zero first column. As such, one can easily check that up to operations $1^\circ-3^\circ$ of the above matrix is equivalent to matrix $\left(\begin{array}{cc}
0 & \mathbb{0} \\
\mathbb{0}^{T} & I_{n}+L(G)
\end{array}\right),$ where $\mathbb{0}$ is a zero vector of length $n.$ Therefore, all nonzero elements of the Smith normal form for $L(\widetilde{G})$ coincide with those for matrix $I_{n}+L(G).$
\end{proof}
We note that matrix $I_{n}+L(G)$ is always non-singular. Also, $\textrm{coker}\,(I_{n}+L(G))$ is an Abelian group whose size $\det(I_{n}+L(G))$ is equal to the number of rooted spanning forests in graph $G.$ So, it is natural to call $\textrm{coker}\,(I_{n}+L(G))$ as a \textit{forest group} of $G$ and denote it by $F(G).$ Then, then main  statement of Theorem~\ref{forest} can be rephrased as follows: 
\smallskip

\textit{Jacobian of the cone over a graph $G$ is isomorphic to its forest group $F(G).$}

\section{Jacobian of cone over a circulant graph}\label{circjac}

This section is devoted to Jacobians of a specific class of graphs, namely cones over circulant graphs. We consider two types of circulant graphs $G=C_{n}(s_{1},s_{2},\ldots,s_{k}) $ and $G=C_{2n}(s_{1},s_{2},\ldots,s_{k}, n)$  with even and odd valency of vertices respectively. Denote by $\widetilde{G}$ the cone over  graph $G.$ Using the results of section \ref{jacgcone}, we establish general structural theorems  for $Jac({\widetilde{G}})$  or, equivalently, for the forest group $F(G)=\textrm{coker}(I+L(G)).$

\subsection{Forest group of circulant graph of even valency}

Consider a $2k$-valent circulant graph $G=C_{n}(s_{1},s_{2},\ldots,s_{k}),$ where $1\leq s_{1}<\ldots<s_{k}<\frac{n}{2}.$ Its Laplacian has the form $L(G)=2k I_{n}-\sum\limits_{l=1}^{k}(T_{n}^{s_{l}}+T_{n}^{-s_{l}}),$ where $T_{n}=\textrm{circ}(0,1,0,\ldots,0)$ is $n\times n$ circulant matrix representing the shift operator: $(x_{1},x_{2},\ldots,x_{n-1},x_{n})\rightarrow(x_{2},x_{3},\ldots,x_{n},x_{1}).$ Then the forest group $\textrm{coker}(I+L(G))$ has the following presentation 
$$\langle x_{i},\,i\in\mathbb{Z}\big|(2k+1)x_{j}-\sum\limits_{l=1}^{k}
(x_{j+s_{l}}+x_{j-s_{l}})=0,x_{j+n}=x_{j},\,j\in\mathbb{Z}\rangle.$$
By Proposition~\ref{prop0}, we conclude that $\textrm{coker}(I+L(G))$ is isomorphic to the $\textrm{coker}(\mathcal{A}^n-I),$ where $\mathcal{A}$ is a companion matrix of the Laurent polynomial $2k+1-\sum\limits_{l=1}^{k}(z^{s_{l}}+z^{-s_{l}}).$ Combine this observation with Theorem~\ref{forest}, we get the following result.
\begin{theorem}\label{evencone}Let $\widetilde{G}$  be a cone over the circulant graph $G=C_{n}(s_{1},s_{2},\ldots,s_{k}),$ where $1\leq s_{1}<s_{2}<\ldots<s_{k}<\frac{n}{2}.$ Then $Jac({\widetilde{G}})$ is isomorphic to   $\textrm{coker}(\mathcal{A}^n-I),$ where $\mathcal{A}$ is a companion matrix of the Laurent polynomial $2k+1-\sum\limits_{l=1}^{k}(z^{s_{l}}+z^{-s_{l}}).$ 
\end{theorem}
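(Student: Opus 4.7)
The plan is to chain together the two main results that have already been proved in the paper. By Theorem~\ref{forest}, the Jacobian $Jac(\widetilde{G})$ is isomorphic to the forest group $F(G)=\textrm{coker}(I_n+L(G))$, so it suffices to identify this cokernel with $\textrm{coker}(\mathcal{A}^n-I)$. Proposition~\ref{prop0} provides exactly such an identification once the operator $I_n+L(G)$ is presented as $P(T_n)$ for a bimonic Laurent polynomial $P(z)$.

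First, I would write the Laplacian of the circulant graph in terms of the shift operator. Since the vertex $i$ of $G=C_{n}(s_{1},\ldots,s_{k})$ is joined to the vertices $i\pm s_{l}\pmod{n}$, the adjacency matrix is $A(G)=\sum_{l=1}^{k}(T_{n}^{s_{l}}+T_{n}^{-s_{l}})$ and the degree matrix is $2k\,I_{n}$. Hence
$$I_{n}+L(G) \;=\; (2k+1)I_{n}-\sum_{l=1}^{k}\bigl(T_{n}^{s_{l}}+T_{n}^{-s_{l}}\bigr) \;=\; P(T_{n}),$$
where $P(z)=2k+1-\sum_{l=1}^{k}(z^{s_{l}}+z^{-s_{l}})$. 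Accordingly, the forest group has the presentation
$$F(G)\;=\;\bigl\langle x_{i},\,i\in\mathbb{Z}\ \big|\ P(T)x_{j}=0,\ (T^{n}-1)x_{j}=0,\ j\in\mathbb{Z}\bigr\rangle,$$
exactly the kind of object handled by Proposition~\ref{prop0}.

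Next, I would verify that $P(z)$ falls under the scope of Proposition~\ref{prop0}. Its lowest- and highest-degree monomials are $-z^{-s_{k}}$ and $-z^{s_{k}}$, so $P$ itself is not bimonic in the strict sense of the paper, but $-P$ is. Multiplying a defining relation by the unit $-1$ does not alter the resulting cokernel, so Proposition~\ref{prop0} applies and gives
$$\textrm{coker}(I_{n}+L(G)) \;=\; \textrm{coker}\,P(T_{n}) \;\cong\; \textrm{coker}(\mathcal{A}^{n}-I),$$
where $\mathcal{A}$ is the companion matrix of $P(z)$ (equivalently, of $-P(z)$), a $2s_{k}\times 2s_{k}$ integer matrix whose inverse is also integral. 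Combining this with Theorem~\ref{forest} yields $Jac(\widetilde{G})\cong\textrm{coker}(\mathcal{A}^{n}-I)$, as required.

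There is essentially no hard step: the argument is a clean assembly of Proposition~\ref{prop0} and Theorem~\ref{forest} once $I_{n}+L(G)$ is recognized as a Laurent polynomial in $T_{n}$. The only subtlety worth flagging explicitly is the sign convention for the bimonic form, which I would dispose of by noting that $P$ and $-P$ define the same cokernel and share (up to an obvious rewriting) the same companion matrix, so the statement of the theorem in terms of the companion matrix of $2k+1-\sum_{l=1}^{k}(z^{s_{l}}+z^{-s_{l}})$ is unambiguous.
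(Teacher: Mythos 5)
Your proof is correct and follows essentially the same route as the paper: write $I_n+L(G)$ as $P(T_n)$ with $P(z)=2k+1-\sum_{l=1}^{k}(z^{s_l}+z^{-s_l})$, apply Proposition~\ref{prop0} to identify $\textrm{coker}\,P(T_n)$ with $\textrm{coker}(\mathcal{A}^n-I)$, and combine with Theorem~\ref{forest}. Your explicit remark that $P$ itself is not bimonic but $-P$ is, and that this does not affect the cokernel, is a small point the paper passes over silently, but it does not change the argument.
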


\subsection{Forest group of circulant graph of odd valency}

Consider a $2k+1$-valent circulant graph of the following form 
$$G=C_{2n}(s_{1},s_{2},\ldots,s_{k},n),\text{ where }1\leq s_{1}<s_{2}<\ldots<s_{k}<n.$$ 
In this case, Laplacian matrix of $G$ is 
$(2k+1)\,I_{2n}-T_{2n}^{n}-\sum\limits_{j=1}^{k}(T_{2n}^{s_{j}}+T_{2n}^{-s_{j}}),$ where 
$T_{2n}=\textrm{circ}(0,1,0,\ldots,0)$ is a $2n\times 2n$ circulant matrix. In order to get the forest group of graph $G$ we have to find $\textrm{coker}(I+L(G)).$ It can be viewed as an infinitely generated Abelian group satisfying the following set of relations  
$$\langle x_{i},\,i\in\mathbb{Z}\big|(2k+2)x_{j}-x_{j+n}-
\sum\limits_{l=1}^{k}(x_{j+s_{l}}+x_{j-s_{l}})=0,\,x_{j+2n}=x_{j},\,j\in\mathbb{Z}\rangle.$$
By making use of the shift operator $T:x_{j}\rightarrow x_{j+1},\,j\in\mathbb{Z}$ we rewrite the later formula as 
$$\langle x_{i},\,i\in\mathbb{Z}\big|(2k+2-T^{n}-\sum\limits_{l=1}^{k}
(T^{s_{l}}+T^{-s_{l}}))x_{j}=0,\,(T^{2n}-1)x_{j}=0,\,j\in\mathbb{Z}\rangle.$$ 
We can increase the list of relations by ones that are linear combinations of elements of a given set. 
One of such combinations is 
$$({T}^{2n}-1)+B(T)(2k+2-{T}^n-\sum\limits_{l=1}^{k}
(T^{s_{l}}+T^{-s_{l}})=(2k+2-\sum\limits_{l=1}^{k}(T^{s_{l}}+T^{-s_{l}}))^{2}-1,$$
where $B(T)=2k+2+{T^n}-\sum\limits_{l=1}^{k}(T^{s_{l}}+T^{-s_{l}}).$ In turn, ${T}^{2n}-1$ is a linear combination of $2k+2-{T}^n-\sum\limits_{l=1}^{k}(T^{s_{l}}+T^{-s_{l}})$ and 
$(2k+2 -\sum\limits_{l=1}^{k}(T^{s_{l}}+T^{-s_{l}}))^{2}-1.$ So, it can be replaced by the latter expression in the group presentation. Hence, $\textrm{coker}(I+L(G))$ admits the following presentation 
$$\langle x_{i},\,i\in\mathbb{Z}\big|(2k+2-T^{n}-\sum\limits_{l=1}^{k}(T^{s_{l}}+T^{-s_{l}}))x_{j}
=0,(2k+2 -\sum\limits_{l=1}^{k}(T^{s_{l}}+T^{-s_{l}}))^{2}-1)x_{j}=0,\,j\in\mathbb{Z}\rangle.$$  

By Proposition~\ref{prop0}, the forest group $\textrm{coker}(I+L(G))$ is isomorphic to the $\textrm{coker}(\mathcal{A}^n-(2k+2)I+\sum\limits_{l=1}^{k}(\mathcal{A}^{s_{l}}+\mathcal{A}^{-s_{l}})),$ where $\mathcal{A}$ is a companion matrix of the Laurent polynomial $(2k+2-\sum\limits_{l=1}^{k}(z^{s_{l}}+z^{-s_{l}}))^2-1.$
Applying Theorem~\ref{forest}, we rewrite the obtained result in the following form.
\begin{theorem}\label{oddcone}Let $\widetilde{G}$  be a cone over the circulant graph  
$$G=C_{2n}(s_{1},s_{2},\ldots,s_{k},n),\,1\leq s_{1}<s_{2}<\ldots<s_{k}<n.$$ Then $Jac({\widetilde{G}})$ is isomorphic to the $\textrm{coker}(\mathcal{A}^n-(2k+2)I+\sum\limits_{j=1}^{k}(\mathcal{A}^{s_{j}}+\mathcal{A}^{-s_{j}})),$ where $\mathcal{A}$ is a companion matrix of the Laurent polynomial $(2k+2-\sum\limits_{j=1}^{k}(z^{s_{j}}+z^{-s_{j}}))^2-1.$
\end{theorem}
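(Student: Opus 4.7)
The plan is to follow the strategy already used in the even-valency case (Theorem~\ref{evencone}), adapted to handle the additional term $T^{n}$ that appears because of the edge at distance $n$ in $C_{2n}(s_{1},\ldots,s_{k},n)$. I would start from the presentation of $\textrm{coker}\,(I+L(G))$ already derived in the text above the theorem. Writing $Q(T):=2k+2-\sum_{l=1}^{k}(T^{s_{l}}+T^{-s_{l}})$ for brevity, this group has generators $x_{i}$, $i\in\mathbb{Z}$, and two families of relations $(T^{n}-Q(T))\,x_{j}=0$ and $(T^{2n}-1)\,x_{j}=0$, together with the shift action $T\colon x_{j}\mapsto x_{j+1}$.

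The crucial algebraic step is to replace the circulant relation $(T^{2n}-1)x_{j}=0$ by a bimonic relation depending only on $Q$. This rests on the identity
\begin{equation*}
(T^{n}+Q(T))\,(T^{n}-Q(T))\;+\;\bigl(Q(T)^{2}-1\bigr)\;=\;T^{2n}-1,
\end{equation*}
which shows that, modulo the first family, $(T^{2n}-1)x_{j}=0$ is equivalent to $P(T)x_{j}=0$, where $P(z):=Q(z)^{2}-1$. The resulting equivalent presentation has the two families $(T^{n}-Q(T))x_{j}=0$ and $P(T)x_{j}=0$. Note that $P(z)$ is bimonic: the extremal terms of $Q(z)$ are $-z^{\pm s_{k}}$, so the extremal terms of $Q(z)^{2}$ are $z^{\pm 2s_{k}}$ with coefficient $+1$, and $P(z)$ inherits this property.

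Now I can invoke the mechanism of Proposition~\ref{prop0}. Imposing only the bimonic relation $P(T)x_{j}=0$ collapses the abstract group to $\mathbb{Z}^{\deg P}$, on which $T$ acts as the companion matrix $\mathcal{A}$ of $P(z)$. Substituting $T\mapsto\mathcal{A}$ in the remaining relation turns $T^{n}-Q(T)$ into
\begin{equation*}
\mathcal{A}^{n}-Q(\mathcal{A})\;=\;\mathcal{A}^{n}-(2k+2)I+\sum_{l=1}^{k}\bigl(\mathcal{A}^{s_{l}}+\mathcal{A}^{-s_{l}}\bigr),
\end{equation*}
so the full group is the cokernel of this matrix. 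Combining with Theorem~\ref{forest}, which identifies $\textrm{Jac}(\widetilde{G})$ with $\textrm{coker}\,(I+L(G))$, yields the claimed isomorphism.

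The main obstacle I anticipate is justifying the last step rigorously. Proposition~\ref{prop0} as stated pairs a single bimonic relation with $T^{n}-1$, whereas here the ``secondary'' relation is $T^{n}-Q(T)$. I would need to revisit its proof and confirm that exactly the same argument goes through: first pass to $\textrm{coker}(P(T))\cong\mathbb{Z}^{\deg P}$, then quotient by the image of any further $\mathbb{Z}[T]$-linear operator, which on $\mathbb{Z}^{\deg P}$ is obtained by the substitution $T\mapsto\mathcal{A}$. Everything else, including the substitution identity for $Q(\mathcal{A})$, is a routine computation.
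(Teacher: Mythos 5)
Your proposal is correct and follows essentially the same route as the paper: the identical identity $(T^{n}+Q(T))(T^{n}-Q(T))+(Q(T)^{2}-1)=T^{2n}-1$ is used to trade the circulant relation $T^{2n}-1$ for the bimonic relation $Q(T)^{2}-1$, after which the companion-matrix mechanism of Proposition~\ref{prop0} converts the remaining relation $T^{n}-Q(T)$ into $\mathcal{A}^{n}-Q(\mathcal{A})$. The caveat you flag at the end --- that Proposition~\ref{prop0} as stated pairs the bimonic relation only with $T^{n}-1$ --- is legitimate, but the paper invokes the proposition in exactly this extended form, and the needed extension follows verbatim from its proof.
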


\section{Jacobian of a cone over cobordism of two circulant graphs}\label{cobjac}

Let us consider two circulant graphs on $n$ vertices, namely $C_{1}=C_{n}(s_{1,1},s_{1,2},\ldots,s_{1,k})$ and $C_{2}=C_{n}(s_{1,1},s_{1,2},\ldots,s_{1,l}),$ with $k$ and $l$ jumps respectively. Then the cobordism of two circulant graphs $C_{1}$ and $C_{2}$ is a graph $G,$ which is obtained from $C_{1}$ and $C_{2}$ by connecting $i$-vertex of $C_{1}$ with $i$-vertex of $C_{2}.$ The Laplacian matrix of graph $G$ has the form
$\left(\begin{array}{cc}
(2k+1)I_{n}-\sum\limits_{r=1}^{k}(T_{n}^{s_{1,r}}+T_{n}^{-s_{1,r}})&-I_{n}\\
-I_{n}&(2l+1)I_{n}-\sum\limits_{r=1}^{l}(T_{n}^{s_{2,r}}+T_{n}^{-s_{2,r}})
\end{array}\right).$ 

The complexity and other spectral properties of graph $G$ were investigated in \cite{AbrBaiMed}.

Denote by $\widetilde{G}$ the cone  over  graph $G.$ The aim of this subsection is to find cokernel of $I+L(G),$ that is $Jac(\widetilde{G}).$ To do this, we will use two bi-infinite sequences $x_{j},y_{j},\,j\in\mathbb{Z}.$ Then cokernel of the linear operator $I+L(G)$ is isomorphic to the group
\begin{align*}
\langle x_{i},y_{i},i\in\mathbb{Z}\,|\,(2k+2)x_{j}-\sum\limits_{r=1}^{k}
(x_{j+s_{1,r}}+x_{j-s_{1,r}})-y_{j}=0,\,x_{j+n}-x_{j}=0,\\
(2l+2)y_{j}-\sum\limits_{r=1}^{l}(y_{j+s_{2,r}}+y_{j-s_{2,r}})-x_{j}=0,\,y_{j+n}-y_{j}=0,\,j\in\mathbb{Z}\rangle.
\end{align*}
We note that $y_{j}=(2k+2)x_{j}-\sum\limits_{r=1}^{k}(x_{j+s_{1,r}}+x_{j-s_{1,r}})$ is an integer linear combinations of $x_{j},\,j\in\mathbb{Z}.$ Equivalently, in the operator form $y_{j}=(2k+2-\sum\limits_{r=1}^{k}(T^{s_{1,r}}+T^{-s_{1,r}}))x_{j}.$ Then the group above is isomorphic to 
$$\langle x_{i}\,|\,(2k+2 -\sum\limits_{r=1}^{k}(T^{s_{1,r}}+T^{-s_{1,r}}))
(2l+2 -\sum\limits_{r=1}^{l}(T^{s_{2,r}}+T^{-s_{2,r}}))-1)x_{j}=0,\,
(T^n -1)x_{j},\,j\in\mathbb{Z}\rangle.$$
By Proposition~\ref{prop0} and Theorem~\ref{forest}, we get the following result.

\begin{theorem}\label{cobordism} Let $\widetilde{G}$ be a cone over the cobordism graph $G.$  Then Jacobian $Jac(\widetilde{G})$ is isomorphic to the cokernel of linear operator $\mathcal{A}^n-I,$ where $\mathcal
{A}$ is a companion matrix of the Laurent polynomial
$$(2k+2-\sum\limits_{r=1}^{k}(z^{s_{1,r}}+z^{-s_{1,r}}))
(2l+2-\sum\limits_{r=1}^{l}(z^{s_{2,r}}+z^{-s_{2,r}}))-1.$$
\end{theorem}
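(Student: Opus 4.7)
The plan is to combine Theorem~\ref{forest} with Proposition~\ref{prop0} in the same spirit as the proofs of Theorems~\ref{evencone} and \ref{oddcone}. By Theorem~\ref{forest}, it suffices to compute $\textrm{coker}(I_{2n}+L(G))$, where $L(G)$ is the block $2n\times 2n$ Laplacian written above. Reading off the block rows, I would present this cokernel as an abstract Abelian group on two bi-infinite families of generators $x_j$, $y_j$, $j\in\mathbb{Z}$, corresponding to the two copies of the vertex cycle. The two diagonal blocks contribute the relations $(2k+2-\sum_{r=1}^{k}(T^{s_{1,r}}+T^{-s_{1,r}}))x_j-y_j=0$ and $(2l+2-\sum_{r=1}^{l}(T^{s_{2,r}}+T^{-s_{2,r}}))y_j-x_j=0$ (the $-y_j$ and $-x_j$ terms come from the off-diagonal $-I_n$ blocks), together with the periodicity constraints $(T^n-1)x_j=0$ and $(T^n-1)y_j=0$. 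This is precisely the presentation displayed in the paragraph preceding the theorem.

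Next I would use the first family of relations to eliminate the $y_j$'s. These relations explicitly express each $y_j$ as an integer Laurent polynomial in $T$ applied to $x_j$, namely $y_j=(2k+2-\sum_{r=1}^{k}(T^{s_{1,r}}+T^{-s_{1,r}}))x_j$, which is a legitimate Tietze substitution. After substitution, the $y$-periodicity $(T^n-1)y_j=0$ becomes redundant, since $T$ commutes with everything and $(T^n-1)x_j=0$ is already imposed. Feeding the expression for $y_j$ into the second family of relations collapses the presentation to a single-variable one, in which the only non-periodicity relation is $P(T)x_j=0$ for the Laurent polynomial
$$P(z)=\Bigl(2k+2-\sum_{r=1}^{k}(z^{s_{1,r}}+z^{-s_{1,r}})\Bigr)\Bigl(2l+2-\sum_{r=1}^{l}(z^{s_{2,r}}+z^{-s_{2,r}})\Bigr)-1.$$

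Finally I would check that $P(z)$ is bimonic so that Proposition~\ref{prop0} applies. The extreme monomials of the product come from multiplying the outermost monomials of each factor, namely $(-z^{s_{1,k}})(-z^{s_{2,l}})=z^{s_{1,k}+s_{2,l}}$ and $(-z^{-s_{1,k}})(-z^{-s_{2,l}})=z^{-(s_{1,k}+s_{2,l})}$; both have coefficient $+1$, and since $s_{1,k}+s_{2,l}\ge 2$ the subtracted constant $-1$ cannot cancel them. Applying Proposition~\ref{prop0} to $L=P(T_n)$ with companion matrix $\mathcal{A}$ then yields $\textrm{coker}(I+L(G))\cong\textrm{coker}(\mathcal{A}^n-I)$, and combining with Theorem~\ref{forest} gives $Jac(\widetilde{G})\cong\textrm{coker}(\mathcal{A}^n-I)$.

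The main obstacle I anticipate is verifying the soundness of the $y$-elimination in the bi-infinite presentation. One has to confirm that the Tietze substitution neither loses information (e.g.\ the $y$-periodicity is genuinely implied by the $x$-relations after substitution) nor introduces spurious relations, and that the resulting single-variable presentation matches the hypotheses of Proposition~\ref{prop0}. Once this bookkeeping is in place, the remaining steps are routine manipulations with the shift operator and a direct appeal to the proposition.
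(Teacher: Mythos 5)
Your proposal is correct and follows essentially the same route as the paper: present $\textrm{coker}(I+L(G))$ on two bi-infinite families of generators, eliminate the $y_j$'s via the relation $y_j=(2k+2-\sum_{r}(T^{s_{1,r}}+T^{-s_{1,r}}))x_j$, and apply Proposition~\ref{prop0} together with Theorem~\ref{forest} to the resulting single-variable presentation. Your explicit verification that the product polynomial is bimonic is a worthwhile addition that the paper leaves implicit.
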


\section{Examples}\label{Exp}

\bigskip
\noindent
\textbf{$1^{\circ}.$} \textbf{Wheel graph} $W(n).$ The graph $W(n)$ is a cone over cyclic graph $C_{n}=C_{n}(1).$ By Theorem~\ref{treestoforests}, the number of spanning trees $\tau(W(n))$ is equal to the number of rooted spanning forests in $C_{n}$ counting earlier in \cite{GrunMed}. Hence, $\tau(W(n))=2T_{n}(\frac{3}{2})-2.$ See also paper \cite{BoePro} for an alternating proof of this result.

By Theorem~\ref{evencone}, the Jacobian of  wheel graph $Jac(W(n))$ is isomorphic to the cokernel of linear operator $\mathcal{A}^{n}-I_{2},$ where $\mathcal{A}=\{\{0,1\},\{-1,3\}\}$ is a companion matrix of the Laurent polynomial $3-z-z^{-1}.$  Direct calculations leads to the well-known result \cite{Lor}: $Jac(W(n))$ is isomorphic to $\mathbb{Z}_{F_n}\oplus\mathbb{Z}_{5F_n}$ if $n$ is even, and $\mathbb{Z}_{L_n}\oplus\mathbb{Z}_{L_n}$ if $n$ is odd, where $F_n$ and $L_n$ are the Fibonacci and Lucas numbers respectively.
\bigskip

\noindent
\textbf{$2^{\circ}.$} \textbf{The cone over the M\"obius ladder $\widetilde{M}(n).$} Recall that the M\"obius ladder $M(n)$ is circulant graph $C_{2n}(1,n).$ By Theorem~\ref{treestoforests} and Theorem~2 from paper \cite{GrunMed}, the number of spanning trees in the cone over M\"obius ladder $\widetilde{M}(n)$ can be found in the following way
$$\tau(\widetilde{M}(n))=4(T_{n}(\frac{3}{2})-1)(T_{n}(\frac{5}{2})+1).$$

The Jacobian of the cone over  $M(n)$ is isomorphic to the cokernel of linear operator $\mathcal{A}^{n}-4I_{4}+\mathcal{A}+\mathcal{A}^{-1},$ where $\mathcal{A}$ is a companion matrix of the Laurent polynomial $(4-z-z^{-1})^2-1.$

\bigskip
\noindent
\textbf{$3^{\circ}.$} {\bf The cone over prism graph $\widetilde{Pr}(n).$} This graph is a cone over cobordism of two cyclic graphs $C_{n}.$ By arguments similar to those from the proof of Theorem~2 in \cite{GrunMed}, the number of spanning trees of the cone over prism graph $\widetilde{Pr}(n)$ is given by the formula
$$\tau(\widetilde{Pr}(n))=4(T_{n}(\frac{3}{2})-1)(T_{n}(\frac{5}{2})-1).$$

By Theorem~\ref{cobordism}, Jacobian of the cone over prism graph $Pr(n)$ is isomorphic to cokernel of the linear operator $\mathcal{A}^{n}-I_{4},$ where $\mathcal{A}$ is a companion matrix of the Laurent polynomial $(4-z-z^{-1})^2-1.$

\section*{ACKNOWLEDGMENTS}
This work was supported by the Russian Foundation for Basic Research (grant 18-01-00036). The study of the second named author was carried out within the framework of the state contract of the Sobolev Institute of Mathematics (project no. 0314-2019-0007).
\newpage

\end{document}